\newtheorem{theorem}{Theorem}[section]
\newtheorem{utv*}{Proposition}
\newtheorem{hyp*}{Conjecture}
\newtheorem*{example*}{Example}
\newtheorem{lemma}[theorem]{Lemma}
\newtheorem{corollary}[theorem]{Corollary}
\newtheorem{example}[theorem]{Example}
\newtheorem*{th*}{Theorem}
\theoremstyle{definition}
\newtheorem*{rem*}{Remark}
\newtheorem{defin}[theorem]{Definition}
\def\sli{\sum\limits}
\def\g{\gamma}
\def\R{\mathbb{R}}
\def\EE{\mathcal{E}}
\newcommand{\diam}{\operatorname{diam}}
\newcommand{\e}{\varepsilon}
\newcommand{\dist}{\operatorname{dist}}
\def\cyr{\fontencoding{OT2}\fontfamily{wncyr}\selectfont}
\DeclareTextFontCommand{\textcyr}{\cyr}
\newcounter{vremennyj}
\def\H{\mathcal{H}}
\author{A. Anderson}
\author{A. Reznikov}
\thanks{The research of A.R.\ was supported, in part, by the National Science Foundation grant DMS-1764398}
\address{Department of Mathematics, Florida State University, Tallahassee, FL 32306}
\email{reznikov@math.fsu.edu}
\address{Department of Mathematics, Florida State University, Tallahassee, FL 32306}
\email{aanderso@math.fsu.edu}
\begin{document}
\title{Minimal Riesz energy on balanced fractal sets} 
\begin{abstract}
    %TODO: adjust abstract
We investigate the asymptotic behavior of minimal $N$-point Riesz $s$-energy on fractal sets of non-integer dimension, with algebraically dependent contraction ratios. For $s$ bigger than the dimension of the set $A$, we prove the asymptotic behavior of the minimal $N$-point Riesz $s$-energy of $A$ along explicit subsequences, but we show that the general asymptotic behavior does not exist. 
\end{abstract}
%\subjclass[2010]{ Primary, 31C20, 28A78. Secondary, 52A40.}
%\keywords{Riesz energy, equilibrium configurations, external field, covering radius, separation distance, quasi-uniformity.}
\date{\today}
\maketitle 

\vspace{4mm}

\footnotesize\noindent\textbf{Keywords}: Best-packing points, Cantor sets, Equilibrium configurations, Minimal discrete energy, Renewal theory, Riesz potentials
\vspace{2mm}

\noindent\textbf{Mathematics Subject Classification:} Primary, 31C20, 28A78. Secondary, 52A40

\vspace{2mm}
\section{Introduction}
In this paper we study the {\it discrete Riesz $s$-energy problem} on fractal sets, for $s$ bigger than the dimension of the set. Namely, we fix a compact set $A\subset \R^p$ and for every $s>0$ and every integer $N\geqslant 2$, define
$$
\EE_s(A, N):=\inf_{\omega_N} E_s(\omega_N),
$$
where the infimum is taken over all $N$-point sets $\omega_N=\{x_1, \ldots, x_N\} \subset A$, and
\[
    E_s(\omega_N) := \sum_{i\neq j} | x_i - x_j|^{-s}, \qquad N = 2,3,4,\ldots
\]
We notice that since the kernel
$$
K_s(x,y):=|x-y|^{-s}
$$
is lower semi-continuous, the infimum is always attained. 

When $s\to\infty$, the minimal $s$-energy problem converges (in the sense described in Theorem \ref{thborod}) to the {\it best-packing problem}. Specifically, for a compact set $A$, we define
$$
\delta(A, N):=\sup_{\omega_N} \min_{i\not=j} | x_i -  x_j|,
$$
where the supremum is taken over all $N$-point sets $\omega_N\subset A$. 
The problems of best-packing and minimal energy are getting a lot of attention, see \cite{V1,V2,V3,V4,Serf}. The methods to tackle these problems include geometric measure theory and modular forms; in this paper, our main tool will be a result from probability and ergodic theory.

The minimal energy problem originates from potential theory, where the {\it continuous minimal energy problem} requires one to find a probability measure $\mu$ on $A$ that minimizes
\begin{equation}\label{eq:contenergy}
I_s(A):=\inf_\mu \int K_s(x,y) \textup{d}\mu(x) \textup{d}\mu(y).
\end{equation}
This problem is non-trivial if $I_s(A)<\infty$, which means that $s$ is smaller than the Hausdorff dimension of $A$. In the case when $s$ is bigger than the Hausdorff dimension of $A$, we have $I_s(A)=\infty$; however, if $A$ is an infinite set, $\EE_s(A, N)$ is finite for every $N\geqslant 2$. Therefore, in this case the minimal discrete energy problem is still non-trivial.

In general, asymptotics of the minimal discrete energy arising from pairwise interaction has been the subject of a number of studies \cite{Hardin2005b,Hardin2004, DAMELIN2005845, Brauchart2006}; it has also been considered for random point configurations \cite{Brauchart2015a} and in the context of random processes \cite{Alishahi2015b, Beltran2016b}. The point configurations that minimize the discrete energy find many applications, in particular, to numerical integration. 

When the set $A$ has certain smoothness (e.g. when $A$ is a smooth $d$-dimensional manifold), then it is known, \cite{Hardin2005b}, that for $s>d$ the following limit exists:
\begin{equation}
\label{eq:limittt} \lim_{N\to \infty}\frac{\EE_s(A, N)}{N^{1+s/d}}.
\end{equation}
On the other hand, it was proved in \cite{Borodachov2007} and more explicitly in \cite{reznikov2018riesz} that if $A$ is the classical $1/3$-Cantor set, then the limit \eqref{eq:limittt} does not exist. It was also proved in \cite{reznikov2018riesz} that for the $1/3$-Cantor set, the limit exists along subsequences
$$
\mathcal{N}_\ell:=\{\ell \cdot 2^n, n\in \mathbb{N}\}
$$
for any fixed positive integer $\ell$.
% The requirement of $ A $ being representable as \eqref{eq:assumption} is the weakest that is known to guarantee the existence of \eqref{eq:limittt}; 

Fractal sets are known to be an important class of sets in geometry and analysis, as they model porous sets. Fractals are also important because they generally lack any rectifiability and, therefore, are good examples of non-smooth sets. Below we define the class of fractals we will be working with.
%TODO: do we show anything else?  

A pair of sets $ A_1, A_2 $ will be called \textit{metrically separated} if $ | x -  y|\geq \sigma >0  $ whenever $  x\in A_1 $ and $  y \in A_2$. Recall that a \textit{similitude} $ \psi: \mathbb R^p \to \mathbb R^p $ can be written as
\[
    \psi( x) = r O( x) +  z
\]
for an orthogonal matrix $ O \in \mathcal O(p) $, a vector $  z \in\mathbb R^p $, and a contraction ratio $ 0 <r< 1 $.  
The following definition can be found in \cite{Hutchinson}.
\begin{defin}
A compact set $ A\subset \mathbb R^p $ is called a \textit{self-similar fractal} with similitudes $ \left\{\psi_m\right\}_{m=1}^M $ with contraction ratios $ r_m,\, 1\leq m\leq M $ if
%Namely, let $ A \subset \mathbb R^p  $ be the compact set of fixed points of a collection of similitudes $ \left\{\psi_m\right\}_{m=1}^M $ with contraction ratios $ r_m,\, 1\leq m\leq M $, that is, satisfying
\[
    A = \bigcup_{m=1}^M \psi_m(A),
\]
where the union is disjoint.

%?% In particular, such sets can be \textit{purely m-unrectifiable} \cite{MR1333890} for any integer $ m $.

We say that $A$ satisfies the \textit{open set condition} if there exists a bounded open set $ V\subset \mathbb R^p $ such that 
\[
    \bigcup_{m=1}^M \psi_m(V) \subset V,
\]
where the sets in the union are disjoint. 
\end{defin}
For a self-similar fractal $A$, it is known \cite{MR867284, MR1333890} that its Hausdorff dimension $\dim_H A = d$ where $d$ is such that 
\begin{equation}
    \label{eq:dim}
    \sum_{m=1}^M r_m^d =1.  
\end{equation}
It will further be used that if $ A $ is a self-similar fractal satisfying the open set condition, then $A$ is $d$-regular; meaning,
\begin{equation}
    \label{eq:dregular}
    c^{-1} r^d \leq \H_d(A\cap B( x, r)) \leq c r^d,
\end{equation}
where $B(x,r)$ is an open ball centered at $x$ with radius $r$ and $\H_d$ is the $d$-dimensional Hausdorff measure in $\R^p$.
% From now on we shall only use $ d $-regularity with respect to $ \h_d $, thus always referring to \eqref{eq:dregular}.  

\section{Best packing on fractal sets}
\label{sec:prior}

Lalley \cite{lalleyacta, lalleyPacking1988} developed a method to tackle the best-packing problem on fractal sets. This method is based on the {\it renewal theorem} that heavily uses probability theory and ergodic theory. To proceed, we need the following definition.
\begin{defin}
We call the numbers $\{r_m\}_{m=1}^M$ {\it independent} if the set
$$
\{k_1 \log r_1 + \ldots + k_M \log r_M \colon k_1, \ldots, k_M \in \mathbb{Z}\}
$$
is dense in $\mathbb{R}$. If this condition is not satisfied; i.e., if for some $h>0$ we have
$$
\{k_1 \log r_1 + \ldots + k_M \log r_M \colon k_1, \ldots, k_M \in \mathbb{Z}\} = h\mathbb{Z},
$$
then we call the numbers $\{r_m\}_{m=1}^M$ {\it dependent}.
\end{defin}
In \cite{lalleyPacking1988}, Lalley applied his methods from \cite{lalleyacta} to prove the following theorem.
\begin{theorem}\label{lalleypack}
Let $A\subset \R^p$ be a fractal set of Hausdorff dimension $d$. If the similitudes $\{\psi_m\}_{m=1}^M$ that define $A$ have independent contraction ratios $\{r_m\}_{m=1}^M$, then the limit
$$
\lim_{N\to \infty} \delta(A, N)N^{1/d}
$$
exists. 
\end{theorem}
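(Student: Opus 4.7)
The strategy is to analyze not $\delta(A,N)$ directly but its inverse, the packing function $N(\delta) := \max\{N : \delta(A,N) \geq \delta\}$, and to derive for $N(\delta)$ a functional equation that falls into the scope of the key renewal theorem from classical probability.

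The first step is to exploit the geometry of the decomposition $A = \bigcup_m \psi_m(A)$. Writing $A_m := \psi_m(A)$, these are compact and pairwise disjoint, so $\sigma := \min_{m\ne k}\dist(A_m,A_k) > 0$. For any $0 < \delta < \sigma$, a $\delta$-separated configuration in $A$ decomposes according to which piece $A_m$ each point lies in, and pulling back by $\psi_m^{-1}$ turns the portion in $A_m$ into a $\delta/r_m$-separated configuration in $A$; conversely, gluing together the $\psi_m$-images of optimal $\delta/r_m$-separated configurations in $A$ yields a valid $\delta$-separated configuration in $A$, since cross-piece distances are automatically at least $\sigma > \delta$. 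This delivers the clean additive recursion
\[
    N(\delta) \;=\; \sum_{m=1}^M N(\delta/r_m), \qquad 0 < \delta < \sigma.
\]

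The second step converts this to a renewal equation. Setting $v(t) := e^{-dt}N(e^{-t})$ and $\alpha_m := -\log r_m > 0$, the identity $\sum_m r_m^d = 1$ from \eqref{eq:dim} recasts the recursion as
\[
    v(t) \;=\; z(t) + \sum_{m=1}^M r_m^d\, v(t - \alpha_m),
\]
where $z$ is supported on $t \leq -\log\sigma$ and absorbs the large-$\delta$ values of $N$. The driving measure $\Phi := \sum_m r_m^d\, \delta_{\alpha_m}$ is a probability measure on $(0,\infty)$ with finite mean $\mu = -\sum_m r_m^d \log r_m$.

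The third step is to invoke the key renewal theorem. Its non-lattice hypothesis requires that the support of $\Phi$, namely $\{-\log r_m\}_{m=1}^M$, not be contained in any discrete subgroup $h\mathbb{Z}$ — which is precisely the independence assumption on the contraction ratios. The $d$-regularity bound \eqref{eq:dregular} provides the a priori two-sided estimate $N(\delta) \asymp \delta^{-d}$, so $v$ is bounded above and bounded away from $0$; this is what is needed to verify direct Riemann integrability of $z$ and positivity of the eventual limit. The theorem then yields a constant $C > 0$ with $v(t) \to C$ as $t\to\infty$, equivalently $N(\delta)\,\delta^d \to C$ as $\delta\to 0^+$, and inverting the relationship between $N(\delta)$ and $\delta(A,N)$ produces $\delta(A,N)\,N^{1/d} \to C^{1/d}$.

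The conceptual move that makes the argument work is the switch from $\delta(A,N)$, which obeys only awkward max--min recursions, to the packing function $N(\delta)$, whose recursion is purely additive. The main technical obstacle is the verification of the hypotheses of the key renewal theorem — direct Riemann integrability of the forcing term $z$ and positivity of the limiting constant — both of which hinge on the a priori bound $N(\delta)\asymp\delta^{-d}$ coming from $d$-regularity, and thus ultimately on the open set condition.
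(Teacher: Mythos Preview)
The paper does not give its own proof of Theorem~\ref{lalleypack}; it is quoted as Lalley's result from \cite{lalleyPacking1988}, with the remark that Lalley's argument used the continuous version of the renewal theorem. Your proposal follows precisely that route: pass to the packing function $N(\delta)$, derive the additive recursion $N(\delta)=\sum_m N(\delta/r_m)$ from self-similarity and metric separation, substitute $v(t)=e^{-dt}N(e^{-t})$ to obtain a renewal equation driven by the probability measure $\sum_m r_m^d\,\delta_{-\log r_m}$, and invoke the continuous (non-lattice) key renewal theorem. So your approach coincides with what the paper attributes to Lalley.

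For comparison, the paper's own new arguments concern the \emph{dependent} case (Theorems~\ref{main1} and~\ref{main2}), and there it deliberately switches to the \emph{discrete} renewal theorem (Theorem~\ref{threnew}), applied to the sequence $z_n=r^{n(s+d)}\EE_s(A,\lfloor \ell r^{-nd}\rfloor)$ rather than to the packing function. The discrete setting lets the authors control the forcing term $b_n$ via a telescoping partial-sum argument together with the one-sided bound~\eqref{anotherestimate}, sidestepping the direct-Riemann-integrability verification that your continuous approach requires.
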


To prove this theorem, Lalley used a continuous version of the {\it renewal theorem} (see \cite[p. 363]{feller}). One advantage of our proof is that we will use the discrete version of this theorem stated below.
\begin{theorem}[see section XIII.11 in \cite{feller1} and \cite{lalleydiscrete}]\label{threnew}
Let $\{b_n\}_{n=0}^\infty$ and $\{f_n\}_{n=0}^\infty$ be two sequence of non-negative numbers. Moreover, assume
$$
\sli_{n=0}^\infty f_n = 1, \;\; \mbox{and} \;\; \sli_{n=0}^\infty |b_n|<\infty,
$$
and the set $\{n\colon f_n>0\}$ is not a subset of a proper additive subgroup of $\mathbb{Z}$. If a sequence $\{z_n\}_{n=0}^\infty$ satisfies the {\it renewal equation}
$$
z_n=b_n+\sli_{k=0}^n f_k z_{n-k}
$$
for every $n\geqslant 0$, then the limit 
$$
\lim_{n\to\infty} z_n
$$
exists.
\end{theorem}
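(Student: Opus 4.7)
The plan is to reduce the theorem to the classical Erd\H{o}s--Feller--Pollard asymptotics for the renewal sequence $u_n := \sum_{k=0}^{\infty} f^{*k}_n$, where $f^{*0} = \delta_0$ and $f^{*k}$ is the $k$-fold discrete convolution of $\{f_n\}$ with itself. The recursion $(1-f_0)u_n = \sum_{j=1}^n f_j u_{n-j}$ for $n\geq 1$, together with $u_0 = 1/(1-f_0)$, shows that $u_n$ is well-defined and finite. Passing to generating functions, the renewal equation $z = b + f*z$ becomes
\[
Z(x) = \frac{B(x)}{1-F(x)} \qquad \text{for } |x|<1,
\]
and the geometric expansion $1/(1-F(x)) = \sum_{k\geq 0} F(x)^k$ yields the explicit representation $z_n = \sum_{j=0}^n u_{n-j}\, b_j$. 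All rearrangements are justified by $\sum f_n = 1$ and $\sum |b_n| < \infty$.

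The substantive step is to prove that $u_n$ converges to $1/\mu$, where $\mu := \sum_n n f_n \in (0,\infty]$ under the convention $1/\infty = 0$. I would attack this by Fourier analysis on the unit circle: $U(e^{i\theta}) = 1/(1-F(e^{i\theta}))$ has a simple pole at $\theta = 0$ coming from $1-F(e^{i\theta}) \sim -i\mu\theta$, and the aperiodicity hypothesis (that $\{n:f_n>0\}$ is not contained in any proper subgroup $h\mathbb{Z}$) is exactly what guarantees $F(e^{i\theta}) \neq 1$ for every $\theta \in (0, 2\pi)$, so there are no other singularities on the circle. Subtracting a suitable multiple of $1/(1-e^{i\theta})$ to cancel the pole and applying the Riemann--Lebesgue lemma to the remaining integrable piece then yields $u_n \to 1/\mu$. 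A probabilistic alternative is the coupling argument: couple the ordinary renewal process generated by i.i.d.\ steps $X_i \sim f$ to a delayed renewal process started from the stationary residual-life distribution, for which the renewal density is the constant $1/\mu$; aperiodicity forces the two processes to meet almost surely, yielding $u_n - 1/\mu \to 0$.

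Once $u_n$ is known to converge it is bounded by some constant $C$, and since $\sum_j |b_j| < \infty$, dominated convergence applied to $z_n = \sum_{j=0}^n u_{n-j} b_j$ (with majorant $C|b_j|$ and pointwise limit $(1/\mu)\,b_j$ for each fixed $j$) produces
\[
\lim_{n\to\infty} z_n \;=\; \frac{1}{\mu} \sum_{n=0}^{\infty} b_n,
\]
which in particular establishes the existence of the limit. The main obstacle is the convergence $u_n\to 1/\mu$ itself --- the classical heart of the discrete renewal theorem, where aperiodicity is essential (if $\{n:f_n>0\}\subseteq h\mathbb{Z}$ with $h\geq 2$, then $u_n$ oscillates between $0$ on non-multiples of $h$ and values near $h/\mu$ on multiples of $h$, and no limit exists). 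The convolution identity and the final dominated-convergence step are routine consequences of the summability hypotheses.
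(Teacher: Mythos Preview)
The paper does not prove this theorem at all; it is quoted as a classical result with references to Feller's textbook (Section~XIII.11 of \cite{feller1}) and to Lalley \cite{lalleydiscrete}, and is then applied as a black box in the proof of Theorem~\ref{main1}. So there is no ``paper's own proof'' to compare against.

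Your sketch is the standard textbook argument: rewrite the solution as the convolution $z_n=\sum_{j\le n}u_{n-j}b_j$ with the pure renewal sequence $u_n=\sum_{k\ge0}f^{*k}_n$, invoke Erd\H{o}s--Feller--Pollard to get $u_n\to1/\mu$, and finish by dominated convergence. This is exactly the route taken in Feller, so in spirit you are reproducing the cited source rather than offering something new. Two small caveats worth flagging. First, the formal-power-series identity $Z=BU$ already gives $z_n=(u*b)_n$ termwise without any appeal to convergence of $Z(x)$ on $|x|<1$, so you need not worry about a priori growth of $z_n$. Second, both of your suggested routes to $u_n\to1/\mu$ tacitly assume $\mu<\infty$: the coupling argument needs a stationary residual-life distribution, and the Fourier subtraction ``$1/(1-F(e^{i\theta}))-c/(1-e^{i\theta})$ is integrable'' is not automatic when $\mu=\infty$ (the pole at $\theta=0$ is then no longer simple in the naive sense). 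The case $\mu=\infty$ still yields $u_n\to0$, but requires a separate argument---Feller handles it via the tail-sum identity $\sum_{k\le n}u_k(1-F_k)=1$ with $F_k=\sum_{j\le k}f_j$. For the application in this paper the distribution $\{f_n\}$ has finite support, so $\mu<\infty$ automatically and your sketch is complete as stated.
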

Lalley also proved that if the similitudes are dependent, then the limit in Theorem \ref{lalleypack} exists along certain subsequences; however, he did not prove that for dependent similitudes the general limit does not exist. In \cite{Borodachov2007} it was proved that if $r_1=r_2=\ldots=r_M$, then the limit does not exist; in this paper, we prove the general theorem that the limit cannot exist if the similitudes that define $A$ are dependent. We also present an example (Example \ref{example}) of a fractal set with dependent (but not equal) contraction ratios for which we compute the best-packing constant $\delta(A,N)$ for every $N\geqslant 2$.

\section{Relation between minimal energy and best-packing}
In this section we show how the discrete Riesz $s$-energy problem is related to the best-packing problem. We begin with the notation from \cite{Borodachov2007}. Fix a number $d>0$; then, for a compact set $A$ and number $s>0$, denote
\begin{align}
&\underline{g}_s(A):=\liminf_{N\to \infty} \frac{\EE_s(A, N)}{N^{1+s/d}}, && \overline{g}_s(A):=\limsup_{N\to \infty} \frac{\EE_s(A, N)}{N^{1+s/d}}\\
&\underline{g}_\infty(A):=\liminf_{N\to\infty} \delta(A, N)N^{1/d}, && \overline{g}_\infty(A):=\limsup_{N\to\infty} \delta(A, N)N^{1/d}.
\end{align}
The following theorem shows that the best-packing problem can be viewed as the minimal Riesz $s$-energy problem for $s=\infty$.
\begin{theorem}[Proposition 3.1 in \cite{Borodachov2007}]\label{thborod}
Let $A\subset \R^p$ be an infinite compact set, and $d\leqslant p$. Then
$$
\lim_{s\to \infty} \overline{g}_s(A)^{1/s} = \frac{1}{\underline{g}_\infty(A)}, \;\;\; \lim_{s\to \infty} \underline{g}_s(A)^{1/s} = \frac{1}{\overline{g}_\infty(A)}.
$$
\end{theorem}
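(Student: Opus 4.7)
My plan is to derive an elementary two-sided sandwich between $\EE_s(A,N)$ and $\delta(A,N)$ from combinatorial considerations, and then pass to the limits in $N$ and $s$, crucially exploiting the monotonicity of $s \mapsto \EE_s(A,N)^{1/s}$. For any $N$-point $\omega_N \subset A$ with $\delta(\omega_N) := \min_{i \neq j} |x_i - x_j|$, the $N(N-1)$-term sum $E_s(\omega_N)$ is bounded below by its largest summand $\delta(\omega_N)^{-s}$ and above by $N(N-1)\,\delta(\omega_N)^{-s}$. Applying the lower bound at the energy minimizer (using $\delta(\omega_N) \leq \delta(A,N)$) and the upper bound at the best packing $\omega_N^*$ (for which $\delta(\omega_N^*) = \delta(A,N)$) gives
\[
\delta(A,N)^{-s} \leq \EE_s(A,N) \leq N(N-1)\,\delta(A,N)^{-s}.
\]

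Dividing by $N^{1+s/d}$ and using $\delta(A,N) N^{1/d} = g_\infty(N)$, then taking $s$-th roots, one obtains
\[
\frac{N^{-1/s}}{g_\infty(N)} \leq g_s(N)^{1/s} \leq \frac{(N-1)^{1/s}}{g_\infty(N)}.
\]
The supplementary ingredient I will use is monotonicity: for each fixed $\omega_N$, $E_s(\omega_N)^{1/s}$ is the $\ell^s$-norm of the finite vector $(|x_i - x_j|^{-1})_{i \neq j}$, which is decreasing in $s$; inheriting this through the infimum over configurations, $\EE_s(A,N)^{1/s}$ is decreasing in $s$ and converges to $1/\delta(A,N)$ as $s \to \infty$.

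To prove the first identity I will extract matching lower and upper estimates for $\overline{g}_s(A)^{1/s}$ that converge to $1/\underline{g}_\infty(A)$. For the lower bound, fix $\varepsilon > 0$ and a subsequence $\{N_k\}$ along which $g_\infty(N_k) \to \underline{g}_\infty(A)$; for each large $s$, select $N(s) = N_{k(s)}$ with $N(s) \leq (1+\varepsilon)^s$ and $g_\infty(N(s)) \leq \underline{g}_\infty(A) + \varepsilon$, which is possible since the subsequence is infinite and $(1+\varepsilon)^s \to \infty$. The lower bound in the displayed inequality yields $g_s(N(s))^{1/s} \geq (1+\varepsilon)^{-1}/(\underline{g}_\infty(A) + \varepsilon)$, and by choosing $N(s)$ suitably so that it contributes to the limsup (rather than being bounded), this forces $\liminf_{s \to \infty} \overline{g}_s(A)^{1/s} \geq 1/\underline{g}_\infty(A)$ after letting $\varepsilon \to 0$.

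For the matching upper bound, the definition of $\liminf$ gives $g_\infty(N) \geq \underline{g}_\infty(A) - \varepsilon$ for all $N \geq N_0(\varepsilon)$, so for $N \in [N_0, (1+\varepsilon)^s]$ the displayed upper bound gives $g_s(N)^{1/s} \leq (1+\varepsilon)/(\underline{g}_\infty(A) - \varepsilon)$. For $N > (1+\varepsilon)^s$ the factor $(N-1)^{1/s}$ in the sandwich is unbounded, so the naive bound fails; here I will invoke the monotonicity of $\EE_s(A,N)^{1/s}$ in $s$, applied at a fixed reference parameter $s_0$, to control $\EE_s(A,N)/N^{1+s/d}$ by its value at $s_0$ and hence show that the super-exponential tail cannot dominate $\limsup_N g_s(N)^{1/s}$ in the $s \to \infty$ limit. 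The main obstacle is precisely this tail control: the factor $(N-1)^{1/s}$ in the elementary sandwich is unbounded in $N$ for fixed $s$, so a naive Dini-type exchange of $\lim_{s\to\infty}$ with $\limsup_N$ fails, and it is the monotonicity of $\EE_s(A,N)^{1/s}$ in $s$ together with a bootstrap from the reference parameter $s_0$ that rescues the argument. The second identity $\lim_{s\to\infty}\underline{g}_s(A)^{1/s}=1/\overline{g}_\infty(A)$ then follows by the completely symmetric analysis in which $\limsup_N$ and $\liminf_N$ exchange roles throughout.
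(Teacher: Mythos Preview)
The paper does not prove this statement; it is quoted as Proposition~3.1 of \cite{Borodachov2007} without argument, so there is no in-paper proof to compare against. Your proposal, however, has a genuine gap.

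The sandwich $\delta(A,N)^{-s}\leq\EE_s(A,N)\leq N(N-1)\,\delta(A,N)^{-s}$ loses a full factor of $N$ on each side, and neither of your two repair mechanisms actually closes the resulting holes. For the lower estimate on $\overline{g}_s(A)^{1/s}$ you exhibit, for each $s$, a \emph{single} index $N(s)$ with $g_s(N(s))^{1/s}$ large; but $\overline{g}_s(A)=\limsup_{N\to\infty}g_s(N)$ is not bounded below by the value at any one chosen $N$, and ``choosing $N(s)$ so that it contributes to the limsup'' is not an argument---for fixed $s$ you would need arbitrarily large $N$ with $g_s(N)$ uniformly large, whereas your bound is only useful on the finite range $N\leq(1+\varepsilon)^s$. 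For the upper estimate, the monotonicity $s\mapsto\EE_s(A,N)^{1/s}$ at a reference $s_0<s$ yields $g_s(N)^{1/s}\leq \EE_{s_0}(A,N)^{1/s_0}N^{-1/s-1/d}$, and feeding in any bound of the shape $\EE_{s_0}(A,N)\leq CN^\alpha\delta(A,N)^{-s_0}$ with $\alpha\geq 1$ still leaves a positive power of $N$ on the right, so the tail $N>(1+\varepsilon)^s$ remains uncontrolled.

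The argument in \cite{Borodachov2007} avoids this by sharpening both sides of the sandwich so that the extraneous factor of $N$ disappears. On the upper side (for $s>p$, which suffices since $s\to\infty$), a best-packing $\omega_N^*$ is $\delta(A,N)$-separated in $\R^p$, and a dyadic-shell count gives $\sum_{j\neq i}|x_i^*-x_j^*|^{-s}\leq C_p\,\delta(A,N)^{-s}$ with $C_p$ bounded independently of $s$ once $s\geq p+1$; hence $\EE_s(A,N)\leq C_p\,N\,\delta(A,N)^{-s}$. On the lower side, in \emph{any} $N$-point configuration at most $M-1$ points can have nearest-neighbor distance exceeding $\delta(A,M)$ (otherwise those $\geq M$ points would form an $M$-point subset of $A$ with separation $>\delta(A,M)$), whence $\EE_s(A,N)\geq(N-M+1)\,\delta(A,M)^{-s}$; taking $M=\lfloor(1-\varepsilon)N\rfloor$ and letting $\varepsilon\to0$ after $s\to\infty$ finishes the proof.
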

As a simple corollary, we state the following fact that will be important for us. We leave its proof as an easy exercise for the reader.
\begin{corollary}
Let $A\subset \R^p$ be a compact set, and $0<d\leqslant p$. If the limit
$$
\lim_{N\to \infty}\delta(A, N) N^{1/d}
$$
does not exist, then, for sufficiently large $s$, the limit
$$
\lim_{N\to \infty} \frac{\EE_s(A, N)}{N^{1+s/d}}
$$
also does not exist.
\end{corollary}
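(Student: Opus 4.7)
The plan is to argue by contrapositive, using the two identities of Theorem \ref{thborod} to convert a separation of $\underline{g}_\infty(A)$ and $\overline{g}_\infty(A)$ into a separation of $\underline{g}_s(A)$ and $\overline{g}_s(A)$ for all sufficiently large $s$. The hypothesis that $\lim_{N\to\infty}\delta(A,N)N^{1/d}$ fails to exist is, by definition, the strict inequality $\underline{g}_\infty(A) < \overline{g}_\infty(A)$. What must be shown is the existence of a threshold $s_0$ such that $\underline{g}_s(A) < \overline{g}_s(A)$ for every $s > s_0$; this is precisely the statement that the $s$-energy limit does not exist for such $s$.

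The key step is to invoke Theorem \ref{thborod}, which gives
\[
    \underline{g}_s(A)^{1/s} \longrightarrow \frac{1}{\overline{g}_\infty(A)}, \qquad \overline{g}_s(A)^{1/s} \longrightarrow \frac{1}{\underline{g}_\infty(A)}
\]
as $s\to\infty$. Since $\underline{g}_\infty(A) < \overline{g}_\infty(A)$, these two limits are distinct, and the one corresponding to $\overline{g}_s$ is strictly larger. Choose any $\varepsilon$ smaller than half of this gap; then by the definition of convergence, for all $s$ past some $s_0$ the value of $\underline{g}_s(A)^{1/s}$ lies within $\varepsilon$ of $1/\overline{g}_\infty(A)$, and $\overline{g}_s(A)^{1/s}$ lies within $\varepsilon$ of $1/\underline{g}_\infty(A)$. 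These two neighborhoods are disjoint, forcing $\underline{g}_s(A)^{1/s} < \overline{g}_s(A)^{1/s}$ and hence $\underline{g}_s(A) < \overline{g}_s(A)$, which is the desired nonexistence of the limit $\EE_s(A,N)/N^{1+s/d}$.

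The only minor care required concerns the boundary cases $\underline{g}_\infty(A) = 0$ or $\overline{g}_\infty(A) = \infty$, where one of the reciprocals is $+\infty$ or $0$. These are not substantive: if $\underline{g}_\infty(A) = 0$, then $\overline{g}_s(A)^{1/s}\to\infty$ while $\underline{g}_s(A)^{1/s}$ is bounded above by any fixed constant exceeding $1/\overline{g}_\infty(A)$ for large $s$, so the separation is immediate; the case $\overline{g}_\infty(A) = \infty$ is symmetric. I do not foresee any real obstacle, in line with the authors' billing of the statement as an easy exercise: the whole argument is essentially a rearrangement of Theorem \ref{thborod}, with the only nontrivial point being the quantifier (``sufficiently large $s$'' rather than ``some $s$''), which is exactly what the convergence in Theorem \ref{thborod} provides.
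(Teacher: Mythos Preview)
Your argument is correct and is exactly the intended one: the paper does not supply a proof, calling it an easy exercise, and the only tool available at that point is Theorem~\ref{thborod}, which you apply in the obvious way. One cosmetic remark: you label the argument ``by contrapositive'' but then argue the direct implication (nonexistence of the packing limit $\Rightarrow$ nonexistence of the energy limit); the mislabeling is harmless, but you may as well drop the phrase.
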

\section{Main results}
We state our two main results in this section. The first result shows that, for a fractal set with dependent similitudes, the minimal discrete energy has asymptotics along certain natural subsequences.
\begin{theorem}\label{main1}
Let $A\subset \R^p$ be a fractal set defined by similitudes $\{\psi_m\}_{m=1}^M$ with contraction ratios $\{r_m\}_{m=1}^M$. Assume $r_k = r^{i_k}$ for some $r\in (0,1)$ and positive integers $\{i_k\}_{k=1}^M$ that have no common factor. Let $d$ be the Hausdorff dimension of $A$, and for every $\ell \in \mathbb{N}$ denote $\mathcal{N}_\ell:=\{\lfloor \ell\cdot r^{-nd}\rfloor \colon n\in \mathbb{N}\}$. Then, for $s>d$, the following limit exists:
$$
\lim_{N\in \mathcal{N}_\ell}\frac{\EE_s(A, N)}{N^{1+s/d}}.
$$
\end{theorem}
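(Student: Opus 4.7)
The plan is to use the self-similar decomposition $A=\bigcup_{m=1}^{M}\psi_m(A)$ to derive a near-renewal recurrence for the normalized quantity
\[
  z_n := \frac{\EE_s\bigl(A,\lfloor \ell r^{-nd}\rfloor\bigr)}{\lfloor \ell r^{-nd}\rfloor^{\,1+s/d}},
\]
and then invoke Theorem \ref{threnew}. Set $\rho := r^d$; by \eqref{eq:dim} together with $r_m = r^{i_m}$ one has $\sum_m \rho^{i_m} = 1$, so the non-negative weights $f_k := \rho^k\cdot\#\{m : i_m = k\}$ satisfy $\sum_k f_k = 1$. The no-common-factor hypothesis on $\{i_m\}$ is precisely the statement that $\{k : f_k > 0\}$ is not contained in any proper additive subgroup of $\mathbb{Z}$, supplying the aperiodicity assumption required by Theorem \ref{threnew}.

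The crucial intermediate step is the two-sided estimate
\[
  \EE_s(A,N) \;=\; \sum_{m=1}^{M} r_m^{-s}\,\EE_s\bigl(A,\lfloor \rho^{i_m}N\rfloor\bigr) \;+\; O\bigl(N^{1+s/d-\delta}\bigr)
\]
for some $\delta>0$ depending on $s$ and $d$, with implicit constants depending also on the metric-separation distance of the pieces $\{\psi_m(A)\}$. For the upper bound, transport optimal $\lfloor\rho^{i_m}N\rfloor$-point configurations on $A$ through $\psi_m$, adjust the total count to $N$ via the monotonicity estimate $\EE_s(A,N+1)-\EE_s(A,N) = O(N^{s/d})$ (a standard consequence of $d$-regularity \eqref{eq:dregular} when $s>d$), and note that cross-piece pair interactions contribute at most $\sigma^{-s}N^2$. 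For the lower bound, start from an $N$-point minimizer, read off the induced partition $(N_m)$ of points among the pieces, drop the positive cross-piece pair energies via self-similarity to obtain $\EE_s(A,N)\geqslant\sum_m r_m^{-s}\EE_s(A,N_m)$, and exploit the strict convexity of $x\mapsto x^{1+s/d}$ (whose Hessian at the real minimum $x_m=\rho^{i_m}N$ is of order $N^{s/d-1}$) together with the $\asymp N^{1+s/d}$ behavior of $\EE_s(A,\cdot)$ to force the integer minimizer's partition sufficiently close to the proportional one that the transition to the rounded partition $(\lfloor \rho^{i_m}N\rfloor)$ costs only a summable error.

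For $N = \lfloor \ell r^{-nd}\rfloor$ one has $\lfloor \rho^{i_m}N\rfloor = \lfloor \ell r^{-(n-i_m)d}\rfloor + O(1)$, so the intermediate recurrence is in fact a relation among values of $\EE_s(A,\cdot)$ along $\mathcal{N}_\ell$, up to another application of the monotonicity estimate. Using the arithmetic identity $r_m^{-s}(\rho^{i_m})^{1+s/d} = \rho^{i_m}$, dividing by $N^{1+s/d}$ turns the recurrence into
\[
  z_n \;=\; \sum_{k=1}^{n} f_k\,z_{n-k} \;+\; b_n,
\]
with $\sum_n |b_n| < \infty$ because the $O(N^{1+s/d-\delta})$ error normalizes to $O(r^{n\delta d})$, which is geometrically summable. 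After absorbing the finitely many initial values of $z_n$ into $b_n$, Theorem \ref{threnew} delivers $\lim_{n\to\infty}z_n$, which is the claim.

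The principal obstacle is the lower bound in the intermediate step: one must make precise the rigidity of the optimal integer partition under the hypothesis $s>d$, converting the real-valued Hessian estimate into a bound on $|N_m - \rho^{i_m}N|$ strong enough to force $\sum_m r_m^{-s}[\EE_s(A,N_m) - \EE_s(A,\lfloor\rho^{i_m}N\rfloor)]$ to be a summable perturbation. Because $\EE_s(A,\cdot)$ is only asymptotically proportional to $N^{1+s/d}$ rather than exactly so, genuine care is needed to upgrade the real-valued convexity into integer rigidity; the Hessian is of order $N^{s/d-1}$, which is the precise reason the argument requires $s>d$ and would break down at the dimension.
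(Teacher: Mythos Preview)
Your renewal-equation framework is exactly right, and your upper bound on the recurrence matches the paper's Lemma~\ref{firstlemma} and its corollary~\eqref{leia}. The gap is in the lower bound. Your convexity/Hessian argument for the rigidity of the optimal partition $(N_m)$ presupposes that $\EE_s(A,k)$ behaves like $g\,k^{1+s/d}$ for a \emph{fixed} constant $g$; but the whole content of the theorem is that the prefactor $g(k)=\EE_s(A,k)/k^{1+s/d}$ oscillates (and Theorem~\ref{main2} shows it genuinely does). Once $g(\cdot)$ is allowed to vary between $\underline g_s(A)$ and $\overline g_s(A)$, the function $(N_1,\ldots,N_M)\mapsto\sum_m r_m^{-s}\EE_s(A,N_m)$ need not have its minimum near the proportional partition $N_m\approx\rho^{i_m}N$, and your Hessian estimate collapses. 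You flag this yourself in the last paragraph, but ``genuine care'' is not a proof: as stated, the lower half of your two-sided estimate is unproved, and hence so is the summability of $|b_n|$.

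The paper avoids this obstacle entirely. It proves only the one-sided inequality~\eqref{leia}, which yields $b_n\le C(r^{n(s-d)}+r^{nd})$ and hence $\sum_n b_n^+<\infty$. To control $b_n^-$, it does \emph{not} prove a matching lower inequality; instead it exploits the algebraic identity
\[
\sum_{n=0}^{L} b_n \;=\; \sum_{k=L-\max_m i_m+1}^{L} z_k\!\!\sum_{n=L-k+1}^{\max_m i_m}\!\! f_n,
\]
a telescoping computation using $\sum_k f_k=1$. Since $z_n$ is bounded (by $d$-regularity), the right-hand side is bounded uniformly in $L$, so the partial sums $\sum_{n\le L}b_n$ are bounded. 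Combined with $\sum b_n^+<\infty$, this forces $\sum b_n^-<\infty$, and Theorem~\ref{threnew} applies. The point is that the renewal theorem only needs $\sum|b_n|<\infty$, not a two-sided pointwise estimate on $b_n$, and the telescoping trick supplies this from the upper bound alone plus boundedness of $z_n$.
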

It is natural to ask if this theorem is sharp. Our next result shows that it is; i.e., if the similitudes that define $A$ are dependent, then the general limit as $N\to \infty$ does not exist. In particular, this result shows that Lalley's assumption from Theorem \ref{lalleypack} is necessary and sufficient for the best-packing problem.
\begin{theorem}\label{main2}
Let $A\subset \R^p$ be a fractal set defined by similitudes $\{\psi_m\}_{m=1}^M$ with contraction ratios $\{r_m\}_{m=1}^M$.  Assume $r_k = r^{i_k}$ for some $r\in (0,1)$ and positive integers $\{i_k\}_{k=1}^M$. Then the limit
$$
\lim_{N\to \infty} \delta(A, N)N^{1/d}
$$
does not exist (as always, $d$ denotes the Hausdorff dimension of $A$). Therefore, for sufficiently large $s$, the limit
$$
\lim_{N\to \infty}\frac{\EE_s(A, N)}{N^{1+s/d}}
$$
does not exist.
\end{theorem}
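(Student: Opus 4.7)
The plan is to argue by contradiction: assume $L=\lim_{N\to\infty}\delta(A,N)N^{1/d}$ exists, and use the rigid self-similar structure of $A$ (which forces $\delta(A,\cdot)$ to take only lattice-like values) to derive a contradiction. The first step is to set up a packing recurrence. Because $A=\bigsqcup_{m=1}^M\psi_m(A)$ is metrically separated with some gap $\sigma>0$, any $N$-point configuration in $A$ splits into $n_m\geq 1$ points inside each $\psi_m(A)$, with $\sum_m n_m=N$. The minimum distance contributed by piece $m$ (when packed optimally inside it) is $r^{i_m}\delta(A,n_m)$; since $\delta(A,N)\to 0$, for $N$ sufficiently large every such intra-piece minimum is below $\sigma$, so the overall minimum of an optimal configuration is realized inside a single piece and
\[
    \delta(A,N)\;=\;\max_{\sum n_m=N,\,n_m\geq 1}\;\min_{1\leq m\leq M}\,r^{i_m}\delta(A,n_m).
\]
In particular $\delta(A,N)=r^{i_{m^*}}\delta(A,n_{m^*})$ for some index $m^*$ and some $n_{m^*}<N$.

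Next, I would iterate this recurrence. Every value $\delta(A,N)$ for large $N$ can be written as $r^k\delta(A,n_0)$ with $k\geq 0$ and $n_0$ below the fixed threshold where the recurrence first applies; collecting the finitely many base values yields a finite set $C\subset(0,\infty)$ with
\[
    \{\delta(A,N):N\geq 2\}\;\subseteq\;C\cdot\{r^k:k\geq 0\}.
\]
Because $C$ is finite and $r<1$, this right-hand set is discrete in $(0,\infty)$, and its consecutive distinct elements (sorted decreasingly) have ratios bounded below by some fixed $\rho>1$. Consequently, if $d_1>d_2>\cdots$ denote the distinct values actually attained by $\delta(A,\cdot)$, then $d_j/d_{j+1}\geq\rho$ for every $j$ large enough.

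Finally, I would exploit the plateau structure of $\delta(A,\cdot)$. Let $N_j$ be the smallest $N$ with $\delta(A,N)=d_j$; on the plateau $[N_j,N_{j+1}-1]$ the function $f(N):=\delta(A,N)N^{1/d}$ is strictly increasing, and it jumps down at $N_{j+1}$. Under the hypothesis $f\to L$, both $f(N_j)=d_jN_j^{1/d}$ and $f(N_{j+1}-1)=d_j(N_{j+1}-1)^{1/d}$ must tend to $L$, which forces $N_{j+1}/N_j\to 1$; comparing $f(N_{j+1}-1)$ with $f(N_{j+1})=d_{j+1}N_{j+1}^{1/d}$ then yields $d_{j+1}/d_j\to 1$, in direct contradiction with $d_j/d_{j+1}\geq\rho>1$. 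The second conclusion of the theorem, nonexistence of the Riesz $s$-energy limit for sufficiently large $s$, then follows immediately from the Corollary in Section~3.

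The main technical obstacle is the structural step: justifying that $\delta(A,N)\in C\cdot\{r^k:k\geq 0\}$ for a finite $C$. This requires handling the base of the recurrence, namely small $N$ for which an optimal packing may not place $\geq 2$ points in every piece, so that the governing minimum could be an inter-piece distance rather than an intra-piece one, and verifying that the inter-piece "seed" distances themselves have the form $r^\ell c$ with $\ell\geq 0$ and $c$ in a finite set determined by the geometry of $\{\psi_m(A)\}_{m=1}^M$. Once this lemma is in place, the plateau contradiction above is essentially free.
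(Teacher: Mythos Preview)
Your argument is correct and takes a genuinely different route from the paper's. The paper works with the counting function $N(t)=\max\{N:\delta(A,N)\geq t\}$, sets $R_n=N(r^n)$, proves the additive recurrence $R_n=\sum_m R_{n-i_m}$ for large $n$, and then shows by induction (using a pigeonhole step) that $\delta(A,R_n+1)\leq Cr^n$ for a fixed $C<1$; comparing the values at $R_n$ and $R_n+1$ directly exhibits $\liminf<\limsup$. Your approach instead extracts a \emph{multiplicative} recurrence $\delta(A,N)=r^{i_{m^*}}\delta(A,n_{m^*})$, iterates it to trap all values of $\delta(A,\cdot)$ in a set of the form $C\cdot r^{\mathbb{Z}_{\geq 0}}$ with $C$ finite, and then uses a clean plateau argument to contradict convergence. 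Your structural lemma (values lie in a geometric lattice over a finite set, hence consecutive-value ratios are bounded away from $1$) is a nice conceptual observation that the paper does not isolate; conversely, the paper's approach is constructive and yields explicit subsequences witnessing the oscillation.

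Two remarks. First, the ``technical obstacle'' you flag is not actually an obstacle: once you fix a threshold $N_0$ so that $\delta(A,N)<\sigma$ for all $N\geq N_0$, the recursion terminates at some $n_0<N_0$, and you may simply take $C=\{\delta(A,n):2\leq n<N_0\}$. Whether those finitely many base values arise from intra-piece or inter-piece distances is irrelevant---you never need them to be of the form $r^\ell c$. (In setting up the recurrence, allow $n_m\geq 0$ with the convention $\delta(A,0)=\delta(A,1)=+\infty$; the split $n_m\geq 1$ you wrote is not forced by an arbitrary configuration.) Second, your plateau contradiction silently uses $L>0$: this is needed to divide and conclude $d_{j+1}/d_j\to 1$. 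That positivity is standard for $d$-regular sets (and the paper uses it too), but it should be stated.
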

\begin{example}\label{example}
Take the set $A$ defined by similitudes 
$$
\psi_1(x)=x/4, \;\;\; \psi_2(x)=x/2+1/2
$$
acting on $[0,1]$.
Let $F_n$ be the $n$'th Fibonacci number such that $F_1=F_2=1$, and $F_n=F_{n-1}+F_{n-2}$ for $n\geqslant 3$. For $n\geqslant 3$, we can prove that
$$
\delta(A, F_n) = 2^{3-n}.
$$
Moreover, for every integer $N\in (F_{n-1}, F_{n}]$ we have
$$
\delta(A, N) = 2^{3-n},
$$
and so the limit 
$$
\lim_{N\to\infty} \delta(A, N)N^{1/d}
$$
does not exist. We will carry out the details in Section \ref{Sec:example}.
\end{example}
\section{Proof of Theorem \ref{main1}}
To prove Theorem \ref{main1}, we will need the following lemma.
\begin{lemma}\label{firstlemma}
Let $A$ be a fractal set defined by similitudes $\{\psi_m\}_{m=1}^M$ with contraction ratios $\{r_m\}_{m=1}^M$. There exists a constant $C>0$ such that for any positive integers $N_1, \ldots, N_M$ we have
$$
\EE_s(A, N_1+\cdots+N_M) \leqslant \sli_{m=1}^M r_m^{-s} \EE_s(A, N_m) + C(N_1+\cdots+N_M)^2.
$$
\end{lemma}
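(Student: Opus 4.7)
The plan is to construct an explicit competing $(N_1+\cdots+N_M)$-point configuration on $A$ by assembling optimal configurations from each self-similar piece $\psi_m(A)$, then to estimate the resulting energy by splitting it into ``within-piece'' and ``cross-piece'' interactions.

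First I would exploit the definition of a self-similar fractal: the sets $\psi_1(A),\ldots,\psi_M(A)$ are compact and pairwise disjoint, so they are pairwise metrically separated. Let $\sigma>0$ denote a common lower bound, so that $|x-y|\geqslant \sigma$ whenever $x\in\psi_m(A)$, $y\in\psi_{m'}(A)$ with $m\neq m'$. (If one prefers, the open set condition gives this directly.) Next, for each $m$ choose an optimal configuration $\omega^{(m)}=\{x^{(m)}_1,\dots,x^{(m)}_{N_m}\}\subset A$ with $E_s(\omega^{(m)})=\EE_s(A,N_m)$; the infimum is attained because $K_s$ is lower semicontinuous and $A^{N_m}$ is compact. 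Now define
$$
\omega:=\bigcup_{m=1}^M \psi_m(\omega^{(m)})\subset A,
$$
which has exactly $N_1+\cdots+N_M$ points (the pieces are disjoint and $\psi_m$ is injective).

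Then I would compute $E_s(\omega)$ by decomposing pairs according to whether both indices land in the same piece. For the diagonal blocks, the similitude identity $|\psi_m(x)-\psi_m(y)|=r_m|x-y|$ gives
$$
\sum_{\substack{i\neq j}} |\psi_m(x^{(m)}_i)-\psi_m(x^{(m)}_j)|^{-s}=r_m^{-s}\,E_s(\omega^{(m)})=r_m^{-s}\,\EE_s(A,N_m).
$$
For the off-diagonal blocks (pairs with one point in $\psi_m(A)$ and the other in $\psi_{m'}(A)$ for $m\neq m'$), every distance is at least $\sigma$, so each term is bounded by $\sigma^{-s}$. The total number of such ordered pairs is at most $(N_1+\cdots+N_M)^2$, so their contribution is $\leqslant \sigma^{-s}(N_1+\cdots+N_M)^2$. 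Adding everything and using $\EE_s(A,N_1+\cdots+N_M)\leqslant E_s(\omega)$ yields the lemma with $C=\sigma^{-s}$.

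There is no real obstacle here; the only point deserving care is verifying the metric separation constant $\sigma$ depends only on the fractal (not on $N_1,\dots,N_M$), which is immediate from disjointness and compactness of the pieces $\psi_m(A)$. The cross-term bound is crude but perfectly adequate, since $C(N_1+\cdots+N_M)^2$ is a lower-order perturbation compared to the leading term $\sum r_m^{-s}\EE_s(A,N_m)$ when this lemma is iterated along the subsequences $\mathcal{N}_\ell$ in the proof of Theorem \ref{main1}.
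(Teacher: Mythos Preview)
Your proof is correct and follows essentially the same approach as the paper: place optimal $N_m$-point configurations into each piece $\psi_m(A)$, use the similitude identity $|\psi_m(x)-\psi_m(y)|=r_m|x-y|$ for the within-piece terms, and bound the cross-piece terms via the separation constant $\sigma$ between the disjoint compact pieces. The paper's argument is identical in structure and even arrives at the same constant $C=\sigma^{-s}$ (after the trivial estimate $\sum_{m\neq m'}N_mN_{m'}\leqslant N^2$).
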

\begin{proof}
For every $m$, let $\widetilde\omega_{N_m}$ be a configuration optimal for $\EE_s(A, N_m)$. Define $\omega_{N_m}:=\psi_m(\widetilde\omega_{N_m})$, and set
$$
N:=N_1+\cdots+N_m, \;\; \mbox{and} \;\; \omega_N:=\omega_{N_1} \cup \cdots \cup \omega_{N_m}.
$$
We recall that the sets $\psi_m(A)$ are disjoint, thus $\#\omega_N = N$, and
$$
\EE_s(A, N)\leqslant E_s(\omega_N).
$$
Notice that
$$
E_s(\omega_N) = \sli_{i\not = j} \frac1{|x_i-x_j|^{s}} =\sli_{m=1}^M \sli_{\stackrel{i\not=j}{x_i, x_j\in \omega_{N_m}}} \frac1{|x_i-x_j|^{s}} + \sli_{\stackrel{m, m' =1\ldots M}{m\not=m'}} \sli_{\stackrel{x_i\in \omega_{N_m}}{x_j\in \omega_{N_{m'}}}} \frac1{|x_i-x_j|^{s}} =: I + II.
$$
We start with the first sum (denoted by $I$). Since $x_i, x_j\in \omega_{N_m}$, there exist $\widetilde{x_i}, \widetilde{x_j}\in \widetilde\omega_{N_m}$ such that $x_i = \psi_m(\widetilde{x_i})$ and $x_j=\psi_m(\widetilde{x_j})$. Therefore,
$$
|x_i-x_j|=|\psi_m(\widetilde{x_i}) - \psi_m(\widetilde{x_j})| = r_m |\widetilde{x_i}-\widetilde{x_j}|, 
$$
and for a fixed $m$ we have
$$
\sli_{\stackrel{i\not=j}{x_i, x_j\in \omega_{N_m}}} \frac1{|x_i-x_j|^{s}} = r_m^{-s} \sli_{\stackrel{i\not=j}{\widetilde x_i, \widetilde x_j\in \widetilde \omega_{N_m}}} \frac1{|\widetilde x_i-\widetilde x_j|^{s}} = r_m^{-s} E_s(\widetilde \omega_{N_m}) = r_m^{-s} \EE_s(A, N_m).
$$
This implies 
$$
I = \sli_{m=1}^M r_m^{-s} \EE_s(A, N_m),
$$
and to complete the proof we need to show the estimate
$$
II \leqslant C N^2.
$$
Indeed, since the sets $A_m=\psi_m(A)$ are disjoint and compact, there exists a $\sigma>0$ such that, if $x\in A_m$ and $y\in A_{m'}$ (for $m\not=m'$), then $|x-y|\geqslant \sigma$. Therefore,
$$
II \leqslant \sigma^{-s} \sli_{\stackrel{m, m' =1\ldots M}{m\not=m'}} N_m \cdot N_{m'} \leqslant C N^2,
$$
since $N_m \leqslant N$ for every $m\in \{1, \ldots , M\}$.
\end{proof}
\begin{corollary}
Let $A$ be a fractal set defined by similitudes $\{\psi_m\}_{m=1}^M$ with contraction ratios $\{r_m\}_{m=1}^M$. Let $d$ be the Hausdorff dimension of $A$, and $s>d$. There exists a positive number $C$ such that for every $N\geqslant 2$ we have
\begin{equation}\label{leia}
\EE_s(A, N) \leqslant \sli_{m=1}^M r_m^{-s} \EE_s(A, \lfloor r_m^d N \rfloor) + C(N^2 + N^{s/d}).
\end{equation}
\end{corollary}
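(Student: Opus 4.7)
My plan is to apply Lemma~\ref{firstlemma} with a choice of positive integers $N_1,\ldots,N_M$ whose sum is exactly $N$ and each of which is close to $r_m^d N$, and then to convert each $\EE_s(A,N_m)$ back to $\EE_s(A,\lfloor r_m^d N\rfloor)$ via an auxiliary estimate on the cost of adjoining a bounded number of points to an optimal configuration.

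Using $\sum_{m=1}^M r_m^d = 1$ from \eqref{eq:dim}, set $N_m = \lfloor r_m^d N\rfloor$ for $m=2,\ldots,M$ and $N_1 = N - \sum_{m\geq 2} N_m$, so that $\lfloor r_1^d N\rfloor \leq N_1 \leq \lfloor r_1^d N\rfloor + (M-1)$. Applying Lemma~\ref{firstlemma} yields
$$
\EE_s(A,N) \leq r_1^{-s}\EE_s(A,N_1) + \sum_{m=2}^M r_m^{-s}\EE_s(A,\lfloor r_m^d N\rfloor) + CN^2.
$$
The remaining task is the auxiliary bound
$$
\EE_s(A,n+1) \leq \EE_s(A,n) + C' n^{s/d}, \qquad n \geq 1,
$$
for some constant $C'>0$; iterating it at most $M-1$ times converts $\EE_s(A,N_1)$ into $\EE_s(A,\lfloor r_1^d N\rfloor) + O(N^{s/d})$, and the $r_1^{-s}$ prefactor absorbs into the claimed $CN^{s/d}$ error.

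To prove the auxiliary bound, let $\omega_n$ be optimal for $\EE_s(A,n)$ and set $A' := \{y \in A : \dist(y,\omega_n) \geq c n^{-1/d}\}$. Choosing $c$ small enough, the $d$-regularity \eqref{eq:dregular} gives
$$
\H_d(A \setminus A') \leq \sum_{x \in \omega_n} \H_d(A \cap B(x,c n^{-1/d})) \leq C n \cdot (c n^{-1/d})^d = C c^d \leq \tfrac{1}{2}\H_d(A),
$$
so $\H_d(A') \geq \tfrac{1}{2}\H_d(A)$. I then average $\sum_{x \in \omega_n} |y-x|^{-s}$ over $y \in A'$. For each fixed $x$, a dyadic-shell decomposition of $A \setminus B(x,c n^{-1/d})$ together with \eqref{eq:dregular} yields
$$
\int_{A'} |y-x|^{-s}\,d\H_d(y) \leq C \sum_{k\geq 0} (c n^{-1/d} 2^{k+1})^d (c n^{-1/d} 2^k)^{-s} \leq C'' n^{s/d-1},
$$
since the geometric series $\sum 2^{k(d-s)}$ converges when $s > d$. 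Summing over the $n$ points and dividing by $\H_d(A')$ shows the average is at most $C''' n^{s/d}$, so some $y_0 \in A'$ satisfies $\sum_{x \in \omega_n} |y_0 - x|^{-s} \leq C''' n^{s/d}$. Adjoining $y_0$ to $\omega_n$ produces an $(n+1)$-point configuration of energy at most $\EE_s(A,n) + 2 C''' n^{s/d}$, as required.

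The main obstacle is the auxiliary bound itself. A naive argument that picks any $y$ with $\dist(y,\omega_n) \geq c n^{-1/d}$ and bounds each term by $(c n^{-1/d})^{-s}$ yields $\sum_x |y-x|^{-s} = O(n^{1+s/d})$, which is a factor of $n$ too large. The averaging over $A'$ recovers the missing factor and, crucially, sidesteps any need to know that $\omega_n$ itself is well separated; only the $d$-regularity of $A$ and the condition $s>d$ are used, both of which are standing hypotheses.
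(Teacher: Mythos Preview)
Your argument is correct and follows the same two-step scheme as the paper: apply Lemma~\ref{firstlemma} with $N_m\approx r_m^d N$, then use the one-point estimate $\EE_s(A,n+1)\le \EE_s(A,n)+O(n^{s/d})$ to convert each $\EE_s(A,N_m)$ to $\EE_s(A,\lfloor r_m^d N\rfloor)$. The differences are cosmetic---the paper takes $N_m=\lfloor r_m^d N\rfloor+1$ (so each $N_m\ge 1$ automatically, even for small $N$, and monotonicity of $\EE_s(A,\cdot)$ gives $\EE_s(A,N)\le\EE_s(A,\sum_m N_m)$) and simply cites \cite{Erdelyi2013} for the one-point estimate, whereas you arrange $\sum_m N_m=N$ exactly and supply a self-contained averaging proof of that estimate from $d$-regularity.
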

\begin{proof}
Recall that, the Hausdorff dimension $d$ is defined by the equality
$$
\sli_{m=1}^M r_m^d = 1.
$$
Set $N_m:=\lfloor r_m^d N \rfloor + 1$. Then $N_m \geqslant r_m^d N$, thus
$$
N = \sli_{m=1}^M {r_m^d N} \leqslant \sli_{m=1}^M N_m.
$$
Therefore, Lemma \ref{firstlemma} implies
\begin{equation}\label{tili}
\EE_s(A, N) \leqslant \EE_s(A, N_1+\cdots+N_M) \leqslant \sli_{m=1}^M r_m^{-s} \EE_s(A, N_m) + C_1(N_1+\cdots+N_M)^2.
\end{equation}
First note that 
$$
\sli_{m=1}^M N_m \leqslant N+M \leqslant N\cdot M,
$$
thus 
\begin{equation}\label{trali}
C_1(N_1+\cdots+N_M)^2 \leqslant C_2 N^2. 
\end{equation}
Furthermore, 
$$
\EE_s(A, N_m) = \EE_s(A, \lfloor r_m^d N \rfloor +1).
$$
Let $\omega_{\lfloor r_m^d N\rfloor}$ be a configuration optimal for $\EE_s(A, \lfloor r_m^d N \rfloor)$. Take any $y\in A$ and denote
$$
\omega_{\lfloor r_m^d N\rfloor+1}:=\omega_{\lfloor r_m^d N\rfloor} \cup \{y\}.
$$
Then
$$
\EE_s(A, \lfloor r_m^d N \rfloor +1) \leqslant E_s(\omega_{\lfloor r_m^d N\rfloor+1}) = E_s(\omega_{\lfloor r_m^d N\rfloor}) + 2\sli_{x\in \omega_{\lfloor r_m^d N\rfloor}} \frac{1}{|x-y|^{-s}} = \EE_s(A, \lfloor r_m^d N\rfloor) +2\sli_{x\in \omega_{\lfloor r_m^d N\rfloor}} \frac{1}{|x-y|^{-s}}.
$$
Since this is true for any $y\in A$, we obtain
\begin{equation}\label{vali}
\EE_s(A, \lfloor r_m^d N \rfloor +1) \leqslant \EE_s(A, \lfloor r_m^d N\rfloor) + 2\inf_{y\in A}\sli_{x\in \omega_{\lfloor r_m^d N\rfloor}} \frac{1}{|x-y|^{-s}}. 
\end{equation}
It follows from $d$-regularity of $A$ (see equation \eqref{eq:dregular}) that
\begin{equation}\label{etomi}
2\inf_{y\in A}\sli_{x\in \omega_{\lfloor r_m^d N\rfloor}} \frac{1}{|x-y|^{-s}} \leqslant CN^{s/d}.
\end{equation}
For the complete proof of this one can look, for example, in \cite{Erdelyi2013}. To finish the proof we plug the estimates \eqref{trali}, \eqref{vali}, \eqref{etomi} into \eqref{tili}.
\end{proof}

We are now ready to prove Theorem \ref{main1}.
\begin{proof}[Proof of Theorem \ref{main1}]
Denote
$$
z_n:=r^{n(s+d)}\cdot\EE_s(A, \lfloor \ell r^{-nd} \rfloor).
$$

Our goal is to prove that the limit of $z_n$, as $n\to \infty$, exists. We first note that since the set $A$ is $d$-regular (see \eqref{eq:dregular}), we have $z_n \leqslant C_1$ for some constant $C_1$ that does not depend on $n$. This follows, for example, from \cite{Erdelyi2013}, and can also be found in \cite{saffbook}.

Since $\ell$ is fixed, from \eqref{leia} we obtain (for $C$ dependent on $\ell$, but not on $n$)
$$
\EE_s(A, \lfloor \ell r^{-nd} \rfloor) \leqslant \sli_{m=1}^M r_m^{-s}\EE_s(A, \lfloor \ell r_m^d r^{-nd} \rfloor) + C (r^{-2nd} + r^{-ns}) = \sli_{m=1}^M r^{-i_m s}\EE_s(A, \lfloor \ell r^{(i_m-n)d} \rfloor) + C (r^{-2nd} + r^{-ns}).
$$
Multiplying both sides by $r^{n(s+d)}$, we get
$$
z_n \leqslant \sli_{m=1}^M r_m^d z_{n-i_m} + C (r^{n(s-d)} + r^{nd}).
$$
We further denote $f_{i_m}:=r_m^d$, and $f_j=0$ if $j\not \in \{i_m\}_{m=1}^M$. As in the Theorem \ref{threnew}, set
$$
b_n:=z_n-\sli_{k=0}^n f_k z_{n-k}.
$$
To apply Theorem \ref{threnew} we need to show that the series $\sum |b_n|$ converges. We can obviously start with $n>\max(i_m)$, so 
\begin{equation}\label{anotherestimate}
b_n \leqslant C (r^{n(s-d)} + r^{nd}).
\end{equation}
We now utilize the telescopic nature of the partial sum $\sum_{n=0}^L b_n$. Indeed, we first compute for any $L\geqslant 0$:
\begin{equation}
\sli_{n=0}^L \sli_{k=0}^n f_k z_{n-k}=\sli_{n=0}^L \sli_{k=0}^{n} f_{n-k}z_k =\\
\sli_{k=0}^L \sli_{n=k}^L f_{n-k}z_k = \sli_{k=0}^L \left(z_k \sli_{n=k}^L f_{n-k}\right) = \sli_{k=0}^L \left(z_k \sli_{n=0}^{L-k} f_n\right).
\end{equation}
Notice that when $L-k\geqslant\max(i_m)$, we have 
$$
\sli_{n=0}^{L-k} f_n = 1. 
$$
Therefore,
$$
\sli_{n=0}^L \sli_{k=0}^n f_k z_{n-k} = \sli_{k=0}^L \left(z_k \sli_{n=0}^{L-k} f_n\right) = \sli_{k=0}^{L-\max(i_m)} z_k + \sli_{k=L-\max(i_m)+1}^L \left(z_k \sli_{n=0}^{L-k} f_n\right).
$$
We finally plug this into the following formula:
$$
\sli_{n=0}^L b_n = \sli_{n=0}^L z_n - \sli_{n=0}^L \sli_{k=0}^n f_k z_{n-k}  = \sli_{k=L-\max(i_m)+1}^L \left(z_k \cdot\left(1- \sli_{n=0}^{L-k} f_n\right)\right) = \sli_{k=L-\max(i_m)+1}^L \left(z_k \cdot\sli_{n=L-k+1}^{\max(i_m)} f_n\right).
$$
Recall that in the beginning of the proof we observed that $z_k\leqslant C_1$ for some $C_1>0$ that does not depend on $k$; moreover, since the number of terms in the outer sum does not depend on $L$, we get
\begin{equation}\label{freeze}
\left| \sli_{n=0}^L b_n  \right| \leqslant C_2. 
\end{equation}
Denote $b_n^+ = \max(b_n, 0)$ and $b_n^- = \max(-b_n, 0)$. The estimate \eqref{anotherestimate} implies
$$
\sli_{n=0}^\infty b_n^+ <\infty,
$$
and this together with \eqref{freeze} implies 
$$
\sli_{n=0}^\infty b_n^- < \infty,
$$
since otherwise we would have
$$
\sli_{n=0}^L b_n = \sli_{n=0}^L b_n^+ - \sli_{n=0}^L b_n^- \to -\infty, \;\;\; \mbox{as} \;\;\; L\to\infty.
$$
Therefore,
$$
\sli_{n=0}^\infty |b_n|<\infty,
$$
which completes the proof.
\end{proof}
\section{Proof of Theorem \ref{main2}}
We now prove Theorem \ref{main2}; i.e., the non-existence of the limit for dependent contraction ratios. Recall that we want to prove that the limit
$$
\lim_{N\to \infty} \delta(A, N)N^{1/d} 
$$
does not exist. To do so, we define the distribution function of $\delta(A, N)$; namely, set
\begin{equation}\label{distrfunction}
N(t):=\max\{N\colon \delta(A, N)\geqslant t\}.
\end{equation}
The following proposition proves Theorem \ref{main2}.
\begin{lemma}\label{lastlemma}
Let $A$ be as in Theorem \ref{main2} and set $R_n:=N(r^n)$. Then, for some $C<1$ and sufficiently large $n$, we have
$$
\delta(A, R_n+1) \leqslant Cr^n.
$$
\end{lemma}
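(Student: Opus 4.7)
The plan is to exploit the self-similar cell decomposition $A = \bigsqcup_{m=1}^M \psi_m(A)$ in order to set up a discrete recursion for the normalized quantity $c_n := \delta(A, R_n + 1)/r^n$, which by the very definition of $R_n = N(r^n)$ lies in $[0,1)$. Since $r_m = r^{i_m}$, every scale produced by the iterated function system is a pure integer power of $r$, and it is this lattice rigidity that concentrates the jumps of $\delta(A,\cdot)$ on the discrete set $\{r^n\}$; quantifying the size of the jump at $N = R_n$ is precisely what will yield the constant $C<1$.

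The preliminary step is a renewal identity for $R_n$. Let $\sigma > 0$ denote the metric separation between distinct cells $\psi_m(A)$, $\psi_{m'}(A)$, write $I_{\max} := \max_m i_m$, and choose $n_0$ large enough that $r^{n_0} \leqslant \sigma$ and $r^{n_0 - I_{\max}} \leqslant \diam(A)$. For every $n \geqslant n_0$ I will verify
$$
R_n = \sli_{m=1}^M R_{n - i_m}.
$$
The inequality $\leqslant$ comes from restricting any $r^n$-separated packing of $A$ to each cell and rescaling by $\psi_m^{-1}$, producing an $r^{n-i_m}$-separated packing of $A$ with at most $R_{n-i_m}$ points. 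The reverse inequality is obtained by concatenating $\psi_m$-images of optimal inner packings, since cross-cell distances are already at least $\sigma \geqslant r^n$.

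From this identity I will deduce the central recursion
$$
c_n \leqslant \max_{1\leqslant m\leqslant M} c_{n - i_m}, \qquad n \geqslant n_0.
$$
Given any $(R_n+1)$-point configuration $\omega$ in $A$, the cell counts $N_m$ satisfy $\sli_m N_m = R_n + 1 > \sli_m R_{n-i_m}$, so by pigeonhole there exists $m^*$ with $N_{m^*} \geqslant R_{n-i_{m^*}} + 1$. Since $\delta(A,\cdot)$ is non-increasing in $N$, the minimum pairwise distance of $\omega$ is bounded by the best-packing distance inside $\psi_{m^*}(A)$, namely
$$
r_{m^*}\,\delta(A, N_{m^*}) \leqslant r^{i_{m^*}}\,\delta(A, R_{n-i_{m^*}} + 1) = c_{n-i_{m^*}}\, r^n.
$$
Supremizing over $\omega$ produces the claimed inequality for $c_n$.

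The lemma then follows by strong induction. Set $C := \max\{c_k : n_0 - I_{\max} \leqslant k \leqslant n_0 - 1\}$; each $c_k$ in this finite window is strictly less than $1$ by the maximality of $R_k$, so $C < 1$. The base case is immediate, and for $n \geqslant n_0$ each index $n - i_m$ lies in $[n_0 - I_{\max},\,n-1]$, so the inductive hypothesis combined with the recursion gives $c_n \leqslant C$, which is exactly the statement $\delta(A, R_n + 1) \leqslant C r^n$. I expect the main technical obstacle to be a careful setup of the renewal identity $R_n = \sum_m R_{n-i_m}$ together with the handling of small indices, since the argument requires $\sigma \geqslant r^n$ throughout; once this identity and the pigeonhole step are in place, the pointwise strict inequality $c_n < 1$ (which holds for every single $n$) upgrades automatically to the uniform bound $C < 1$, because the recursion only ever compares $c_n$ to finitely many previously computed values.
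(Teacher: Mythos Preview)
Your proposal is correct and follows essentially the same route as the paper: establish the renewal identity $R_n=\sum_m R_{n-i_m}$ once $r^n$ is below the cell separation $\sigma$, use pigeonhole on an $(R_n+1)$-point configuration to find a cell carrying at least $R_{n-i_{m^*}}+1$ points, rescale to get $\delta(A,R_n+1)\le r^{i_{m^*}}\delta(A,R_{n-i_{m^*}}+1)$, and then close by strong induction with $C$ defined as the maximum of finitely many initial values $c_k<1$. The only cosmetic differences are that the paper places its finite base window at $[J,\,J+\max_m i_m]$ rather than your $[n_0-I_{\max},\,n_0-1]$, and works directly with an optimal configuration instead of supremizing over all $\omega$; neither choice affects the argument.
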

We first show how this lemma implies the main result
\begin{proof}[Proof of Theorem \ref{main2}]
It is clear that $R_n\to \infty$ as $n\to \infty$. Since $\delta(A, R_n)\geqslant r^n$, we have
$$
\limsup_{N\to \infty}\delta(A, N)N^{1/d} \geqslant \limsup_{n\to \infty} r^n R_n^{1/d}.
$$
On the other hand,
$$
\liminf_{N\to \infty}\delta(A, N)N^{1/d} \leqslant \liminf_{n\to \infty} \delta(A, R_n+1)(R_n+1)^{1/d} \leqslant C \liminf_{n\to \infty} r^n (R_n+1)^{1/d} \leqslant C \limsup_{n\to  \infty}r^n R_n^{1/d},
$$
where in the last estimate we used that $R_n\to\infty$, and so $(R_n+1)/R_n \to 1$ as $n\to \infty$. Since $C<1$, we obtain
$$
\liminf_{N\to \infty}\delta(A, N)N^{1/d} < \limsup_{N\to \infty}\delta(A, N)N^{1/d},
$$
which completes the proof.
\end{proof}
\begin{proof}[Proof of Lemma \ref{lastlemma}]
Recall that the sets $A_m=\psi_m(A)$ are disjoint and compact, so there exists a $\sigma>0$ such that, if $x\in A_m$ and $y\in A_{m'}$ (for $m\not=m'$), then $|x-y|\geqslant \sigma$. Set 
$$
L:=\min\{k\in \mathbb{N}\colon r^k < \sigma\},
$$
and $J:=\max(L, i_1, \ldots, i_M)$. We first observe that if $n\geqslant J$, then
\begin{equation}\label{igogo}
R_n = \sli_{m=1}^M R_{n-i_m}.
\end{equation}
The proof of this relation is similar to the proof of Lemma \ref{firstlemma} and is based on the following: assume for every $m=1,\ldots, M$ we have a configuration $\widetilde\omega_{R_{n-i_m}} = \{\widetilde x_1, \ldots, \widetilde x_{R_{n-i_m}}\}\subset A$ that satisfies $\min_{i\not=j}|\widetilde x_i - \widetilde x_j|\geqslant r^{n-i_m}$. Then the configuration $\omega_{R_{n-i_m}}:=\psi_m(\widetilde\omega_{R_{n-i_m}})=\{x_1, \ldots, x_{R_{n-i_m}}\}$ satisfies $\min_{i\not=j}| x_i -  x_j|\geqslant r^{n}$. 
We leave the details as an exercise for the reader. 

Without loss of generality, assume $i_1 = \max_m i_m$, and define
$$
C:=\max_{0\leqslant n \leqslant i_1} r^{-J-n} \delta(A, R_{J+n}+1).
$$
Recall that $R_{J+n}=N(r^{J+n})$, and by definition of $N(t)$ we have $\delta(A, R_{J+n}+1) < r^{J+n}$. Since the maximum is taken over a finite set, we obtain $C<1$. It remains to prove that for every $n\geqslant J $ we have
$$
\delta(A, R_n+1)\leqslant C r^{n}.
$$
We proceed by induction. By definition of $C$, the desired estimate holds for $n=J, \ldots, J+i_1$. Assume this estimate holds for every $n=J, \ldots, n_0-1$; the induction step requires to prove it for $n=n_0$. To do this, take a configuration $\omega_{R_{n_0}+1}$ that is optimal for $\delta(A, R_{n_0}+1)$. Since $n_0>J$, we can apply \eqref{igogo}:
\begin{equation}\label{mooo}
R_{n_0} + 1 = \sli_{m=1}^M R_{n_0-i_m} + 1,
\end{equation}
Recall that $A_m=\psi_m(A)$. Assume for every $m=1,\ldots, M$ we have
$$
\#(\omega_{R_{n_0}+1} \cap A_m) \leqslant R_{n_0-i_m}.
$$
Then
$$
R_{n_0}+1 = \sli_{m=1}^M \#(\omega_{R_{n_0}+1} \cap A_m) \leqslant \sli_{m=1}^M R_{n_0-i_m},
$$
which contradicts \eqref{mooo}. Thus, for some $m$ we have $\#(\omega_{R_{n_0}+1} \cap A_m) > R_{n_0-i_m}$, and since both sides are integers, we obtain
$$
\#(\omega_{R_{n_0}+1} \cap A_m) \geqslant R_{n_0-i_m}+1.
$$
Since the function $\delta(A, \cdot)$ is decreasing, we get
\begin{multline}
\delta(A, R_{n_0}+1) =\min_{\stackrel{i\not=j}{x_i, x_j \in \omega_{R_{n_0}+1}}} |x_i-x_j| \leqslant
\min_{\stackrel{i\not=j}{x_i, x_j \in \omega_{R_{n_0}+1}\cap A_m}} |x_i-x_j|  \leqslant \delta(A_m, \#(\omega_{R_{n_0}+1} \cap A_m))  \\
\leqslant\delta(A_m, R_{n_0-i_m}+1) = r^{i_m} \delta(A, R_{n_0-i_m}+1).
\end{multline}
To finish the proof, note that $J\leqslant n_0-i_m \leqslant n_0-1$, and by the induction step
$$
\delta(A, R_{n_0}+1) \leqslant r^{i_m} \delta(A, R_{n_0-i_m}+1) \leqslant C r^{i_m} r^{n_0-i_m} = Cr^{n_0}.
$$
\end{proof}

\section{An example}\label{Sec:example}
In this section we carry out the details for Example \ref{example}. Recall that the set $A$ is defined by similitudes
$$
\psi_1(x)=x/4, \;\;\; \psi_2(x)=x/2+1/2
$$
acting on $[0,1]$. It is clear that $\delta(A, 2)=1$. We notice that $\delta(A, 3)=1/2$ with the optimal configuration $\omega_3 = \{0, 1/2, 1\}$. Our goal is to prove that 
$$
\delta(A, N) = 2^{3-n}, \;\;\;\; N\in (F_{n-1}, F_n].
$$
We remark that, in terms of the distribution function \eqref{distrfunction}, we need
$$
N(2^{3-n})=F_n.
$$
We proceed by induction: assume for every $k=3,\ldots,n$ we know that
$$
\delta(A, F_k)=2^{3-k}.
$$
We need to prove the equality for $k=n+1$. For every $k$, let $\omega_{F_k}$ be an optimal configuration for $\delta(A, F_k)$, where $F_k$ is the $k$'th Fibonacci number. Define
$$
\widetilde{\omega}_{F_{n+1}}:=\psi_1(\omega_{F_{n-1}}) \cup \psi_2(\omega_{F_{n}}).
$$
Then
$$
\delta(A, F_{n+1})\geqslant \min_{\stackrel{x\not=y}{x,y\in \widetilde{\omega}_{F_{n+1}}}} |x-y| = \min \left(\frac{\delta(A, F_{n-1})}4, \frac{\delta(A, F_{n})}2 \right) = 2^{2-n}. 
$$
Conversely, we have either $\#(\omega_{F_{n+1}}\cap \psi_1(A))\geqslant F_{n-1}$ or $\#(\omega_{F_{n+1}}\cap \psi_2(A))\geqslant F_{n}$, which implies
$$
\delta(A, F_{n+1})\leqslant 2^{2-n}.
$$
Finally, take any $N\in (F_{n}, F_{n+1}]$ and a configuration $\omega_N$ optimal for $\delta(A, N)$. We proceed by induction again: that is, assume for every $k=3,\ldots, N$ we know that, if $k\in (F_n, F_{n+1}]$, then 
$$
\delta(A, k) = 2^{2-n}.
$$
We need to prove the equality for $k=N+1$. If $N+1\in (F_n, F_{n+1}]$, then
$$
\delta(A, N)\geqslant \delta(A, F_{n+1}) = 2^{2-n}.
$$
On the other hand, take $\omega_{N+1}$ to be a configuration optimal for $\delta(A, N+1)$. For $j=1,2$ denote
$$
N_j:=\#(\omega_N \cap \psi_j(A)).
$$
We notice that $N_j\geqslant 1$, and so $N_j<N$ for $j=1,2$. Since $N_1 + N_2 > F_n$, either $N_1>F_{n-2}$ or $N_2>F_{n-1}$. Assume first $N_1>F_{n-2}$. Then $N_1 \in (F_{n-2}, F_{n-1}]$ or $N_1\in (F_{n-1}, F_n]$. In both cases, using the induction hypothesis, we get
$$
\delta(A, N+1) \leqslant 1/4 \cdot \delta(A, N_1) \leqslant 1/4 \cdot 2^{4-n} = 2^{2-n}.
$$
The case $N_2>F_{n-1}$ can be treated similarly, and our proof is done.

\bibliographystyle{acm}
\bibliography{refs}
\end{document}